
\documentclass[10pt,reqno]{amsart}
     \makeatletter
     \def\section{\@startsection{section}{1}%
     \z@{.7\linespacing\@plus\linespacing}{.5\linespacing}%
     {\bfseries
     \centering
     }}
     \def\@secnumfont{\bfseries}
     \makeatother
   \usepackage{amssymb,latexsym}
\setlength{\textheight}{19.5 cm} \setlength{\textwidth}{12.5 cm}
\newtheorem{theorem}{Theorem}[section]

\newtheorem{proposition}[theorem]{Proposition}
\newtheorem{corollary}[theorem]{Corollary}
\theoremstyle{definition}

\theoremstyle{remark}
 \vbadness 40000
 \hbadness 5000
\newtheorem{remark}[theorem]{Remark}
\numberwithin{equation}{section} 
\setcounter{page}{1}

\makeatletter
\newcommand\@received{Received 2025-6-23;
  Accepted 2025-7-10; Communicated by the editors.}
\renewcommand\@adminfootnotes
{\let \@makefnmark \relax \let \@thefnmark \relax
\@footnotetext{\@received }
\ifx \@empty \@date \else \@footnotetext {\@setdate }\fi
\ifx \@empty \@subjclass \else \@footnotetext {\@setsubjclass }\fi
\ifx \@empty \@keywords \else \@footnotetext {\@setkeywords }\fi
\ifx \@empty \thankses \else \@footnotetext
{\def \par {\let \par \@par }\@setthanks }\fi}
\makeatother



\begin{document}

\title[The Cube of a  Gaussian Random variable]
{The Characteristic Function of the Cube of\break
 a Gaussian Random Variable}
\author[Andreas Boukas]{Andreas Boukas*}
\thanks{* Corresponding author}
\address{Andreas Boukas: Centro Vito Volterra, Universit\`{a} di Roma Tor Vergata, via Columbia  2, 00133 Roma,
Italy} \email{andreasboukas@yahoo.com}


\subjclass[2020]{33C10, 47B25, 47B15, 47B40, 60G15, 81Q10}

\keywords{Gaussian random variable, quantum observable, multiplication operator, characteristic function, moments problem, indeterminacy}

\begin{abstract}
Using the spectral resolution of the multiplication operator  on the Schwartz class of $L^2(\mathbb{R},\mathbb{C})$, we compute the characteristic function of the cube  of a Gaussian random variable.
\end{abstract}

\maketitle

\section{Introduction}\label{intro}

The  cube of a Gaussian random variable has been of some interest due to the paper by C. Berg (see \cite{berg}) in which  it was shown that, if $X \sim \mathcal{N}(0,1/2)$ is a random variable, then $X^3$ (and, in general, $X^{2n+1}$, $n=1, 2,..$), is \textit{indeterminate}, i.e., its distribution cannot be uniquely determined by its moments.

In this paper, we consider a \textit{quantum version} of $X$, i.e. a realization of an $\mathcal{N}(0,1/2)$-distributed random variable as a self-adjoint operator on a Hilbert space and we use the \textit{spectral theorem} to compute the \textit{characteristic function} of $X^3$.  We extend to $\mathcal{N}(0,1)$-distributed random variables. No  formula for  the characteristic function of the cube  of a  Gaussian random variable seems to exist in the literature.

We consider the Hilbert space $L^2(\mathbb{R},\mathbb{C})$ with inner
product $\langle \cdot, \cdot \rangle$ and norm $\|\cdot\|$,
\begin{equation*}
\langle f, g \rangle
=\int_{\mathbb{R}}\,\overline{f(x)}g(x)\,dx
\ \,,\,\,\|f\|=\left(
\int_{\mathbb{R}}\,|f(x)|^2\,dx
\right)^{1/2}
\end{equation*}

\noindent respectively, where for a complex number $z$,   $\overline{z}$ denotes its
conjugate. The unbounded self-adjoint  \textit{multiplication  operator} $X$ on $L^2(\mathbb{R},\mathbb{C})$ is defined by
\begin{equation*}\label{qmo}
X\,f(x)=x\,f(x)\  .
\end{equation*}

\noindent The, dense in $L^2(\mathbb{R},\mathbb{C})$,  domain of $X$  is
\begin{equation*}
{\rm dom }(X)=\bigg\{f\in L^2(\mathbb{R},\mathbb{C}))
\,:\,\int_{\mathbb{R}}x^2\,|f(x)|^2\,dx<\infty\bigg\} \  .
\end{equation*}

\noindent The \textit{Schwartz class} $\mathcal{S}$ in $L^2(\mathbb{R},\mathbb{C})$ is a \textit{common invariant
domain} of  $X^n$, $n=1, 2,...$. The \textit{Gaussian function}
\begin{equation*}\label{phi}
\Phi(x)=\pi^{-1/4}\,e^{-\frac{x^2}{2}}
\end{equation*}

\noindent is a unit vector in $\mathcal{S}$. The
characteristic function of a self-adjoint operator $A$ on $L^2(\mathbb{R},\mathbb{C})$ (in the \textit{state} $\Phi$)  is
\begin{equation}\label{dcf}
 \langle  e^{itA} \rangle=  \langle \Phi, e^{itA}\Phi \rangle\,\,,\,\,t\in\mathbb{R} \  .
\end{equation}

\noindent Through \textit{Bochner's theorem} (see \cite{goldberg} and also \cite{BFspec2}-\cite{BFspec}), (\ref{dcf})  identifies $A$ with a classical random variable, and this identification is central to the connection between \textit{quantum} and \textit{classical} probability theory. Such an operator $A$ is usually referred to as a \textit{quantum random variable}, or, in standard quantum mechanical terminology, a \textit{quantum observable}.

A Lie-algebraic approach to the cube of a Gaussian random variable $X$ was taken in  \cite{AccBouCOSA}, where $X^3$ was represented in terms of a \textit{Boson pair} $a, a^{\dagger}$, as
\begin{equation*}
X^3=2^{-3/2}(a+a^{\dagger})^3
\end{equation*}

\noindent and the  characteristic function of $X^3$ was described in terms of the solution of an infinite system of  non-linear  ODE's.

\section{The Characteristic Function of $X^3$}

In \cite{BFspec2}-\cite{BFspec} it was shown, in various ways, that
\begin{equation*}
\langle  e^{i t X}  \rangle=e^{-\frac{t^2}{4}}
\end{equation*}

\noindent which means that  $X \sim \mathcal{N}(0,1/2)$. Replacing $X$ by
\begin{equation}\label{t}
T=\sqrt{2} X
\end{equation}

\noindent we find
\begin{equation*}
\langle  e^{i t T}  \rangle=\langle  e^{i (t\sqrt{2}) X}  \rangle =e^{-\frac{t^2}{2}}
\end{equation*}

\noindent so  $T \sim \mathcal{N}(0,1)$. Similarly, the operator $S$ defined by
\begin{equation}\label{tt}
S=\sigma\, T\,\,,\,\,\sigma>0\ ,
\end{equation}

\noindent satisfies
\begin{equation*}
\langle  e^{i t S}  \rangle=\langle  e^{i (\sigma t) T}  \rangle= e^{-\frac{\sigma^2 t^2}{2}}
\end{equation*}

\noindent which means that  $S \sim \mathcal{N}(0, \sigma^2)$. In general, the operator $W$ defined by
\begin{equation}\label{ttt}
S=\sigma\, T+\mu\,\,,\,\,\sigma>0\,, \,\mu\in\mathbb{R}\ ,
\end{equation}

\noindent satisfies
\begin{equation*}
\langle  e^{i t W}  \rangle=e^{it\mu}\langle  e^{i (\sigma t) T}  \rangle= e^{it\mu-\frac{\sigma^2 t^2}{2}}
\end{equation*}

\noindent so  $W \sim \mathcal{N}(\mu,\sigma^2)$.

\begin{theorem}\label{X} For $t\in \mathbb{R}$, the characteristic function of the cube of the $\mathcal{N}(0,1/2)$-distributed random variable  $X$ of (\ref{qmo})  is
\begin{equation*}\label{nd0}
\langle  e^{i t X^3}  \rangle=\left\{
\begin{array}{llr}
 \frac{2 }{3 |t| \sqrt{3 \pi}}\,e^{\frac{2}{27 t^2}} \,K_{1/3}\left(\frac{2}{27 t^2} \right)   &, \mbox{ if } t\neq 0  \\
1&, \mbox{ if } t=0
\end{array}
\right.
\end{equation*}
\noindent where, for real $\nu$ and (generally) complex $z$,  $K_{\nu}(z)$ is the modified Bessel function of the  2nd kind.
\end{theorem}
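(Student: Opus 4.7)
The plan is to turn the quantum expectation into a one–dimensional Gaussian–weighted oscillatory integral via the spectral theorem, perform a \emph{complex} translation of the integration variable that completes the cube in the exponent (killing the quadratic term), and thereby reduce everything to the standard contour-integral representation of the Airy function, which is then converted to $K_{1/3}$ via the classical identity
\[
\mathrm{Ai}(z)=\tfrac{1}{\pi}\sqrt{z/3}\,K_{1/3}\!\bigl(\tfrac{2}{3}z^{3/2}\bigr),\qquad z>0.
\]

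\emph{Step 1 (spectral reduction).} Since $X$ is the multiplication operator by $x$, the bounded continuous functional calculus gives $e^{itX^{3}}f(x)=e^{itx^{3}}f(x)$; thus
\[
\langle e^{itX^{3}}\rangle=\int_{\mathbb{R}}\!\Phi(x)^{2}\,e^{itx^{3}}\,dx
=\frac{1}{\sqrt{\pi}}\int_{\mathbb{R}}e^{-x^{2}+itx^{3}}\,dx.
\]
The substitution $x\mapsto -x$ shows that this integral is even in $t$, so I may assume $t>0$ and recover $|t|$ at the end.

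\emph{Step 2 (completing the cube).} In the exponent $-x^{2}+itx^{3}$, I would set $x=y-\tfrac{i}{3t}$ so that the coefficient of $y^{2}$ vanishes. A direct expansion then yields
\[
-x^{2}+itx^{3}=ity^{3}+\frac{i\,y}{3t}+\frac{2}{27t^{2}}.
\]
Writing $y=x+i/(3t)$, the integral becomes
\[
\int_{\mathbb{R}}e^{-x^{2}+itx^{3}}\,dx
=e^{2/(27t^{2})}\int_{\mathbb{R}+i/(3t)}\!e^{ity^{3}+iy/(3t)}\,dy.
\]

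\emph{Step 3 (contour shift to the real axis).} I would next apply Cauchy's theorem on the rectangle with vertices $\pm R$ and $\pm R+i/(3t)$. On the vertical sides $y=\pm R+is$, $0\le s\le 1/(3t)$, one has $\mathrm{Im}(y^{3})=3R^{2}s-s^{3}$, so
\[
\bigl|e^{ity^{3}+iy/(3t)}\bigr|=e^{-t(3R^{2}s-s^{3})-s/(3t)}\longrightarrow 0
\]
uniformly in $s$ as $R\to\infty$ (here $t>0$ is essential). Hence the shift is legitimate and
\[
\int_{\mathbb{R}}e^{-x^{2}+itx^{3}}dx
=e^{2/(27t^{2})}\int_{\mathbb{R}}e^{ity^{3}+iy/(3t)}\,dy.
\]

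\emph{Step 4 (Airy and Bessel).} The rescaling $y=(3t)^{-1/3}u$ turns $ity^{3}$ into $iu^{3}/3$ and produces a linear term $iu\cdot(3t)^{-4/3}$, so with $z:=(3t)^{-4/3}>0$ one gets
\[
\int_{\mathbb{R}}e^{ity^{3}+iy/(3t)}\,dy
=\frac{1}{(3t)^{1/3}}\int_{\mathbb{R}}e^{i(u^{3}/3+zu)}\,du
=\frac{2\pi}{(3t)^{1/3}}\,\mathrm{Ai}(z).
\]
Substituting $\mathrm{Ai}(z)=\tfrac{1}{\pi\sqrt{3}(3t)^{2/3}}\,K_{1/3}\!\bigl(\tfrac{2}{27t^{2}}\bigr)$ (using $\tfrac{2}{3}z^{3/2}=\tfrac{2}{27t^{2}}$) and collecting the prefactors gives exactly
\[
\langle e^{itX^{3}}\rangle=\frac{2}{3t\sqrt{3\pi}}\,e^{2/(27t^{2})}\,K_{1/3}\!\left(\frac{2}{27t^{2}}\right)\qquad(t>0),
\]
and evenness in $t$ replaces $t$ by $|t|$. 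The value at $t=0$ is $\langle\Phi,\Phi\rangle=1$.

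The main technical obstacle is Step 3: the integrand on the translated contour does not decay absolutely along horizontal lines, so one must argue rigorously that the vertical pieces of the rectangle contribute $o(1)$ as $R\to\infty$. The cubic gain $-3tR^{2}s$ in $\mathrm{Re}(ity^{3})$ is what saves the argument and is the reason one must restrict to $t>0$ (the case $t<0$ is recovered by complex conjugation, equivalently by the symmetry of $I(t)$ in $t$). Everything else — completing the cube, rescaling, and applying the Airy--Bessel identity — is routine.
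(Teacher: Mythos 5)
Your argument is correct and reaches the stated formula, but it proves the key integral identity by a genuinely different route than the paper. The paper also starts from the spectral reduction $\langle e^{itX^3}\rangle=\pi^{-1/2}\int_{\mathbb{R}}e^{itx^3-x^2}dx$ (your Step 1 is essentially its Remark 2.2), but then reduces to $\frac{2}{\sqrt\pi}\int_0^\infty\cos(tx^3)e^{-x^2}dx$, shows by differentiating under the integral sign that this function of $t$ satisfies the same second-order ODE as $K_{1/3}\bigl(\tfrac{2}{27t^2}\bigr)$, writes the general solution as $c_1 I_{1/3}+c_2K_{1/3}$, and pins down $c_1=0$, $c_2=1$ by comparing small-$t$ asymptotic expansions (term-by-term integration of the cosine series against the known expansions of $I_{1/3}$ and $K_{1/3}$). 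You instead complete the cube with the complex shift $x=y-\tfrac{i}{3t}$, justify the contour shift back to the real axis by Cauchy's theorem on rectangles, and land on the Airy integral $\int_{\mathbb{R}}e^{i(u^3/3+zu)}du=2\pi\,\mathrm{Ai}(z)$ with $z=(3t)^{-4/3}$, finishing with the identity $\mathrm{Ai}(z)=\pi^{-1}\sqrt{z/3}\,K_{1/3}\bigl(\tfrac23 z^{3/2}\bigr)$; I checked the algebra ($\tfrac23 z^{3/2}=\tfrac{2}{27t^2}$ and the prefactors) and it gives exactly the claimed constant $\tfrac{2}{3|t|\sqrt{3\pi}}$. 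Your approach yields a closed-form evaluation in one stroke and avoids the asymptotic-matching step (which is the least self-contained part of the paper's proof), at the cost of importing complex-analytic machinery and the Airy--Bessel identity; the paper's ODE-plus-asymptotics argument stays entirely with real integrals. One small fix: in Step 3 the integrand does \emph{not} tend to $0$ uniformly on the vertical sides (at $s=0$ its modulus is exactly $1$ for every $R$); what is true, and all you need, is the bound
\begin{equation*}
\int_0^{1/(3t)}e^{-t(3R^2s-s^3)-s/(3t)}\,ds\le e^{1/(27t^2)}\int_0^{\infty}e^{-3tR^2s}\,ds=\frac{e^{1/(27t^2)}}{3tR^2}\longrightarrow 0,
\end{equation*}
so each vertical contribution is $O(R^{-2})$ and the shift is legitimate; with that wording corrected the proof is complete.
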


\begin{proof} The spectral resolution of $X$ (see \cite{yosida}, Sections XI.5 and XI.6) is
\begin{equation}\label{sthx}
X=\int_{\mathbb{R}}\lambda \,dE_\lambda
\end{equation}

\noindent where
\begin{equation*}
E_\lambda f(x)=\left\{
\begin{array}{llr}
 f(x)   &, \mbox{ if } x\leq \lambda   \\
0&, \mbox{ if } x>\lambda
\end{array}
\right. \  .
\end{equation*}

\noindent Thus
\begin{equation*}
X^3=\int_{\mathbb{R}}\lambda^3 \,dE_\lambda
\end{equation*}

\noindent which, letting $\xi=\lambda^3$ and $Y=X^3$,  becomes
\begin{equation*}
Y=\int_{\mathbb{R}}\xi \,dF_{\xi}
\end{equation*}

\noindent where
\begin{equation*}
F_{\xi}=E_{{\xi^{1/3}}} \ .
\end{equation*}

\noindent Thus,
\begin{equation}\label{cq}
\langle \Phi, e^{it Y} \Phi \rangle= \int_{\mathbb{R}}e^{i t \xi} \,d\langle \Phi, F_{\xi} \Phi \rangle \ .
\end{equation}

\noindent Since
\begin{equation*}
  F_{\xi} \Phi(x) = \left\{
\begin{array}{llr}
 \Phi(x)   &, \mbox{ if } x\leq \xi^{1/3}   \\
0&, \mbox{ if } x>\xi^{1/3}
\end{array}
\right.
\end{equation*}

\noindent we find

\begin{align*}
   \langle \Phi, F_{\xi} \Phi \rangle&=\int_{\mathbb{R}}\,\overline{\Phi(x)}F_{\xi} \Phi (x)\,dx=\int_{-\infty}^{\xi^{1/3}}\,\overline{\Phi(x)} \Phi (x)\,dx\\
   &=\int_{-\infty}^{\xi^{1/3}}\,| \Phi (x)|^2\,dx =\int_{-\infty}^{\xi^{1/3}}\,\pi^{-1/2}\,e^{-x^2}\,dx \notag
   \end{align*}

\noindent so, for $\xi \neq 0$,

\begin{align}\label{dxt}
      d\langle \Phi, F_{\xi} \Phi \rangle  &=\pi^{-1/2}\,e^{-(\xi^{1/3})^2}\,d(\xi^{1/3})=\pi^{-1/2}\,e^{-(\xi^{1/3})^2}\,\frac{1}{3}\,\xi^{-2/3}\,d\xi\\
  &=\frac{1}{3 \sqrt{\pi}} \xi^{-2/3} \, e^{- \xi^{2/3}}\,d\xi=\frac{1}{3 \sqrt{\pi}} |\xi|^{-2/3} \, e^{- |\xi|^{2/3}}\,d\xi \notag
\end{align}

\noindent which is the formula given in \cite{berg} for the density of the cube of a classical $\mathcal{N}(0,1/2)$-distributed random variable $X$. Thus, by (\ref{cq}),

\begin{align}
 \langle \Phi, e^{itY} \Phi \rangle&= \int_{\mathbb{R}}e^{i t \xi} \,\frac{1}{3 \sqrt{\pi}} \xi^{-2/3} \, e^{- \xi^{2/3}}\,d\xi \\
 &=\frac{1}{3 \sqrt{\pi}} \int_{\mathbb{R}}  \xi^{-2/3} \, e^{i t \xi- \xi^{2/3}}\,d\xi \notag
 \end{align}

 \noindent which, letting $x=\xi^{1/3}$, gives
 \begin{equation}\label{chf}
  \langle \Phi, e^{itY} \Phi \rangle= \frac{1}{ \sqrt{\pi}} \int_{\mathbb{R}}e^{i t x^3-x^2} \,dx \ .
\end{equation}

\noindent Since
 \begin{equation*}\label{chf2}
   \int_{\mathbb{R}}|e^{i t x^3-x^2}| \,dx=  \int_{\mathbb{R}}e^{-x^2} \,dx =\sqrt{\pi}
\end{equation*}

\noindent the  characteristic function of $Y$ exists for all $t\in\mathbb{R}$. Using \textit{Euler's formula}, (\ref{chf}) becomes
 \begin{equation}\label{chff}
  \langle \Phi, e^{itY} \Phi \rangle= \frac{1}{ \sqrt{\pi}} \int_{\mathbb{R}}\cos (tx^3)e^{-x^2} \,dx + i \frac{1}{ \sqrt{\pi}} \int_{\mathbb{R}}\sin (tx^3)e^{-x^2} \,dx \, \ .
\end{equation}

\noindent Since its integrand is an odd function of $x$, the second integral on the right hand side of (\ref{chff}) vanishes and we obtain
\begin{equation*}\label{chfff}
  \langle \Phi, e^{itY} \Phi \rangle= \frac{1}{ \sqrt{\pi}} \int_{\mathbb{R}}\cos (tx^3)e^{-x^2} \,dx
\end{equation*}

\noindent which, since the integrand is an even function of $x$, reduces to
\begin{equation*}\label{chffff}
  \langle \Phi, e^{itY} \Phi \rangle= \frac{2}{\sqrt{\pi}} \int_0^\infty\cos (tx^3)e^{-x^2} \,dx  \  .
\end{equation*}

\noindent We will show that
\begin{equation*}\label{g1}
\frac{2}{ \sqrt{\pi}} \int_0^\infty \cos (tx^3)e^{-x^2} \,dx = \left\{
\begin{array}{llr}
 \frac{2}{3 \,|t| \,\sqrt{3 \pi}}\,e^{\frac{2}{27 t^2}} \,K_{1/3}\left(\frac{2}{27 t^2} \right)   &, \mbox{ if } t\neq 0  \\
1&, \mbox{ if } t=0
\end{array}
\right.
\end{equation*}

\noindent which is the same as
\begin{equation*}\label{g11}
\frac{2}{ \sqrt{\pi}} \int_0^\infty \cos (|t|\,x^3)e^{-x^2} \,dx = \left\{
\begin{array}{llr}
 \frac{2}{3 \,|t| \,\sqrt{3 \pi}}\,e^{\frac{2}{27 |t|^2}} \,K_{1/3}\left(\frac{2}{27 |t|^2} \right)   &, \mbox{ if } t\neq 0  \\
1&, \mbox{ if } t=0
\end{array}
\right.
\end{equation*}

\noindent Thus, it suffices to consider the case $t\geq 0$ only. The case $t=0$ is a well known Gaussian integral and is clear. The case $t> 0$ is equivalent to
\begin{equation}\label{g2}
 3 \sqrt{3 }\, t\,e^{-\frac{2}{27 t^2}} \, \int_0^\infty \cos (tx^3)e^{-x^2} \,dx =
 K_{1/3}\left(\frac{2}{27 t^2} \right)      \ .
\end{equation}

\noindent To prove (\ref{g2}), let $A, B: (0. \infty) \to \mathbb{R}$ be defined by

\begin{align}
A(t)&=  3 \sqrt{3 }\, t\,e^{-\frac{2}{27 t^2}} \, \int_0^\infty \cos (tx^3)e^{-x^2} \,dx \ ,\label{dfu}\\
B(t)&= K_{1/3}\left(\frac{2}{27 t^2} \right) \notag     \ .
\end{align}

\noindent To show that $A=B$ we begin by showing that $A, B$ satisfy the same second-order differential equation. For $t > 0$ we have:
\begin{equation*}
B^\prime(t)=  \left(K_{1/3}\left(\frac{2}{27 t^2} \right)  \right)^\prime =-\frac{4}{27t^3} K_{1/3}^\prime\left(\frac{2}{27 t^2} \right)
\end{equation*}

\noindent and
\begin{equation*}
B^{\prime\prime}(t)= \frac{4}{9t^4} K_{1/3}^\prime\left(\frac{2}{27 t^2} \right)+\frac{16}{27^2t^6} K_{1/3}^{\prime\prime}\left(\frac{2}{27 t^2} \right) \  .
\end{equation*}

\noindent Since, by the definition of modified Bessel functions of the second kind,
\begin{equation*}
t^2\, K_n^{\prime\prime}(t)+t\, K_n^{\prime}(t)-(t^2+n^2)\,K_n(t)=0 \  ,
\end{equation*}

\noindent we have
\begin{equation*}
\frac{4}{27^2 t^4}\, K_{1/3}^{\prime\prime}\left(\frac{2}{27 t^2} \right) +\frac{2}{27 t^2}\, K_{1/3}^{\prime}\left(\frac{2}{27 t^2} \right) -\left(\frac{4}{27^2 t^4}+\frac{1}{9}\right)\,K_{1/3}\left(\frac{2}{27 t^2} \right) =0 \  .
\end{equation*}

\noindent Thus
\begin{equation*}
\frac{t^2}{4} B^{\prime \prime}(t)+\frac{t}{4}B^{ \prime}(t)-\left(\frac{4}{27^2 t^4}+\frac{1}{9}\right)\,B(t) =0 \  .
\end{equation*}

\noindent Similarly, letting
\begin{equation*}\label{bb}
J(t)=\int_0^\infty \cos (tx^3)e^{-x^2} \,dx
\end{equation*}

\noindent (\ref{dfu}) implies
\begin{equation*}
A^{\prime }(t)= \frac{4}{3\sqrt{3}t^3}\,e^{-\frac{2}{27 t^2}} \,J(t)+3 \sqrt{3} \,e^{-\frac{2}{27 t^2}} \,J^\prime (t)
\end{equation*}

\noindent and

\begin{align*}
 A^{ \prime\prime}(t)&= \left(\frac{16}{81\sqrt{3}t^6} - \frac{4}{\sqrt{3}t^4}  \right)\,e^{-\frac{2}{27 t^2}} \,J(t)+ \frac{8}{3\sqrt{3}t^3}\,e^{-\frac{2}{27 t^2}} \,J^\prime(t)\\
 &\,\,\,+3 \sqrt{3} \,e^{-\frac{2}{27 t^2}} \,J^{\prime \prime} (t) \notag\  .
\end{align*}

\noindent Since the integrand $g(t, x)=\cos (tx^3)e^{-x^2}$ of (\ref{bb}) and its partial derivative $g_t(t, x)=-x^3\,\sin (tx^3)e^{-x^2} $  are continuous in both $x$ and $t$, and the integral of $g_t(t, x)$ on $[0, \infty )$  is uniformly convergent for $t\in [0, \beta ]$, for all $\beta >0$ (see Theorem 25.14 of \cite{bartle}), we may differentiate $J(t)$ under the integral sign
(see Theorem 25.19 of \cite{bartle}) and we find
\begin{equation*}\label{bbb}
J^\prime(t)=-\int_0^\infty x^3\,\sin (tx^3)e^{-x^2} \,dx
\end{equation*}

\noindent and, in like manner,
\begin{equation*}\label{bbb2}
J^{\prime\prime}(t)=-\int_0^\infty x^6\,\cos (tx^3)e^{-x^2} \,dx \  .
\end{equation*}

\noindent Thus

\begin{align}\label{mbe}
 & \frac{t^2}{4} A^{\prime \prime}(t)+\frac{t}{4}A^{ \prime}(t)-\left(\frac{4}{27^2 t^4}+\frac{1}{9}\right)\,A(t) \\
 &=\frac{1}{12\sqrt{3}}e^{-\frac{2}{27 t^2}}\int_0^\infty \left((15 t-27 t^3 x^6) \cos (tx^3)-(8+81 t^2) x^3 \sin (t x^3)  \right)  e^{-x^2}\,dx \notag\\
 &=\frac{1}{12\sqrt{3}}e^{-\frac{2}{27 t^2}}\lim_{n\to \infty} \int_0^n \left((15 t-27 t^3 x^6) \cos (tx^3)-(8+81 t^2) x^3 \sin (t x^3)  \right)  e^{-x^2}\,dx \notag\\
 &=\frac{1}{12\sqrt{3}}e^{-\frac{2}{27 t^2}}\lim_{n\to \infty} \left((15 n +6 n^3)t \cos (tn^3)+(4+4 n^2-9n^4 t^2)\sin (t n^3)   \right)e^{-n^2}\notag\\
 &=\frac{1}{12\sqrt{3}}e^{-\frac{2}{27 t^2}}\cdot 0=0\notag \ .
\end{align}

 \noindent The general solution of (\ref{mbe}) is
\begin{equation}\label{ae}
A(t)=c_1 I_{1/3}\left(\frac{2}{27 t^2} \right)  +c_2 K_{1/3}\left(\frac{2}{27 t^2} \right)\,\,,\,\,c_1, c_2\in\mathbb{R} \ ,
\end{equation}

\noindent where, for real $\nu$ and (generally) complex $z$, $I_{\nu}(z)$ and $K_{\nu}(z)$ are the modified Bessel function of the 1st and 2nd kind, respectively (see \cite{arfken}, \cite{web1}).

A direct comparison of the two sides of (\ref{ae}) for the determination of $c_1, c_2$, cannot be made,   due to the fact that there is no closed form for the \textit{highly oscillatory} integral involved in the definition of $A(t)$ in (\ref{dfu}). To overcome this difficulty, we will compare asymptotic expansions of the two sides of (\ref{ae}).

 For real $z$, it is known (see e.g. formula $11.127$ of \cite{arfken}, or formula $9.7.2$ of \cite{web1}) that the modified Bessel function of the second function  $K_{\nu}(z)$ admits the asymptotic (for large $z$) expansion
\begin{equation*}
 K_{\nu}(z)\sim \sqrt{\frac{\pi}{2z}} e^{-z} \left(1+ \frac{(4\nu^2-1^2)}{1!\,8z} +
 \frac{(4\nu^2-1^2)(4\nu^2-3^2)}{2!\,(8z)^2}+\cdots    \right)
\end{equation*}

\noindent which for $\nu=1/3$ and $z=\frac{2}{27 t^2}$ becomes (for $t$ close to zero)
\begin{equation}\label{111}
 K_{1/3}\left(\frac{2}{27 t^2} \right)\sim \frac{3\sqrt{3\pi}}{2} t e^{-\frac{2}{27 t^2}} \left(1-
 \frac{15}{16}t^2+\frac{3465}{512}t^4 +\cdots    \right)  \  .
\end{equation}

\noindent The corresponding expansions for $I_{\nu}(z)$ and $I_{1/3}\left(\frac{2}{27 t^2} \right)$ are
\begin{equation*}
 I_{\nu}(z)\sim \frac{1}{\sqrt{2 \pi z}} e^{z} \left(1- \frac{(4\nu^2-1^2)}{1!\,8z} +
 \frac{(4\nu^2-1^2)(4\nu^2-3^2)}{2!\,(8z)^2}-\cdots    \right)
\end{equation*}

\noindent and
\begin{equation}\label{1111}
 I_{1/3}\left(\frac{2}{27 t^2} \right)\sim \frac{3\sqrt{3}}{2 \sqrt{\pi}} t e^{\frac{2}{27 t^2}} \left(1+
 \frac{15}{16}t^2+\frac{3465}{512}t^4 +\cdots    \right)  \  ,
\end{equation}

\noindent respectively (see  formula $9.7.1$ of \cite{web1}).

\noindent Using the Maclaurin series for the cosine function, from (\ref{dfu}) we obtain
\begin{equation*}
A(t)=  3 \sqrt{3 }\, t\,e^{-\frac{2}{27 t^2}} \, \int_0^\infty \sum_{n=0}^\infty \frac{(-1)^n}{(2n)!} t^{2n}x^{6n}e^{-x^2} \,dx
\end{equation*}

\noindent which, using
\begin{equation*}
   \int_{0}^\infty x^{6n} e^{-x^2} \,dx =\frac{1}{2}\Gamma \left(3n+\frac{1}{2}\right)=\frac{1}{2}\cdot\frac{1\cdot 3\cdot 5\cdot\cdots (6n-1)}{2^{3n}}\sqrt{\pi}
\end{equation*}

\noindent where $\Gamma$ stands for the \textit{Gamma function},
and integrating term-by-term, becomes

\begin{align}
 A(t)&=\frac{ 3 \sqrt{3 }}{2}\, t\,e^{-\frac{2}{27 t^2}} \,\sum_{n=0}^\infty \frac{(-1)^n}{(2n)!} t^{2n}\,\Gamma \left(3n+\frac{1}{2}\right)\\
  &=\frac{3 \sqrt{3 }}{2}\, t\,e^{-\frac{2}{27 t^2}} \left( \Gamma \left(\frac{1}{2}\right)- \frac{1}{2} t^{2}\,\Gamma \left(3+\frac{1}{2}\right)+\frac{1}{24} t^{4}\,\Gamma \left(6+\frac{1}{2}\right)\cdots \right)\notag\\
 &=\frac{3 \sqrt{3 }}{2}\, t\,e^{-\frac{2}{27 t^2}} \left( \sqrt{\pi}- \frac{1}{2} t^{2}\,\frac{1\cdot 3\cdot 5}{2^3}\sqrt{\pi}+\frac{1}{24} t^{4}\,\frac{1\cdot 3\cdot 5\cdot \cdots 11}{2^6}\sqrt{\pi}\cdots \right)\notag
 \end{align}

 \noindent so we have obtained the asymptotic  (for $t$ close to zero) expansion
 \begin{equation}\label{222}
 A(t)\sim \frac{3\sqrt{3\pi}}{2} t e^{-\frac{2}{27 t^2}} \left(1-
 \frac{15}{16}t^2+\frac{3465}{512}t^4 +\cdots    \right)  \  .
\end{equation}

\noindent By  (\ref{111}) and (\ref{1111}),
 \begin{equation*}
\lim_{t \to 0^+}\dfrac{I_{1/3}\left( \frac{2}{27 t^2}\right)}{K_{1/3}\left(\frac{2}{27 t^2}\right)}=\lim_{t \to 0^+}\frac{1}{\pi}e^{\frac{4}{27 t^2}} = \infty\ ,
\end{equation*}

\noindent so,  by  (\ref{ae}) and (\ref{222}),  we conclude that
\begin{equation*}
 c_1=0\,,\,c_2=1 \  .
\end{equation*}

\noindent Therefore,
\begin{equation*}
A(t)=  K_{1/3}\left(\frac{2}{27 t^2} \right) =B(t) \  .
\end{equation*}
\end{proof}

\begin{remark}\label{r1} \rm The integral form  (\ref{chf}) of the characteristic function of $Y=X^3$ could have been obtained directly from (\ref{sthx}), the spectral theorem and
\begin{equation*}
 e^{itY}=e^{it X^3}=f(X)= \int_{\mathbb{R}}f(\lambda) \,dE_\lambda=  \int_{\mathbb{R}}e^{it\lambda^3} \, dE_\lambda
\end{equation*}

\noindent as

\begin{align*}
   \langle \Phi, e^{itY} \Phi \rangle&= \int_{\mathbb{R}}e^{it\lambda^3} d\langle \Phi, \,E_\lambda \Phi \rangle=\int_{\mathbb{R}} e^{it\lambda^3} d \int_{-\infty}^\lambda |\Phi(x)|^2 \,dx \notag\\
   &=\int_{\mathbb{R}} e^{it\lambda^3}  |\Phi(\lambda)|^2 \,d\lambda =\int_{\mathbb{R}} e^{it\lambda^3} \pi^{-1/2} e^{-\lambda^2} \,d\lambda \notag\\
   &=\frac{1}{ \sqrt{\pi}} \int_{\mathbb{R}} e^{it\lambda^3-\lambda^2}  \,d\lambda \notag \  .
\end{align*}

\end{remark}

\begin{remark} \rm A power series approach to the computation of the characteristic function of $Y=X^3$ would have not worked due to the lack of uniform convergence. Specifically, we can try to  obtain a "power series form" of (\ref{chf}) as follows:

\begin{align*}
 \langle \Phi, e^{itY} \Phi \rangle&= \frac{1}{ \sqrt{\pi}} \int_{\mathbb{R}}e^{i t x^3}e^{-x^2} \,dx = \frac{1}{ \sqrt{\pi}} \int_{\mathbb{R}}\sum_{n=0}^\infty \frac{(itx^3)^n}{n!} e^{-x^2} \,dx\\
 &=\frac{1}{ \sqrt{\pi}} \sum_{n=0}^\infty \frac{(it)^n}{n!}\int_{\mathbb{R}} x^{3n} e^{-x^2} \,dx \notag\ .
 \end{align*}

\noindent Using

\begin{align*}
   \int_{\mathbb{R}} x^{3n} e^{-x^2} \,dx &=\frac{1+(-1)^{3n}}{2}\Gamma \left(\frac{3n+1}{2}\right)\\
   &=\left\{
\begin{array}{llr}
  \Gamma \left(3k+\frac{1}{2}\right)  &, \mbox{ if } n=2k,\,k=0, 1, 2,...   \\
0&, \mbox{ if }  n=2k+1,\,k=0, 1, 2,...
\end{array}
\right. \notag
\end{align*}

\noindent where $\Gamma$ stands for the \textit{Gamma function}, we find

\begin{align*}
 \langle \Phi, e^{itY} \Phi \rangle&=\frac{1}{ \sqrt{\pi}} \sum_{k=0}^\infty \frac{(it)^{2k}}{(2k)!}   \Gamma \left(3k+\frac{1}{2}\right)\\
 &=\frac{1}{ \sqrt{\pi}} \sum_{k=0}^\infty \frac{(-1)^k \, t^{2k}}{(2k)!}  \Gamma \left(3k+\frac{1}{2}\right) \notag\\
 &=\frac{1}{ \sqrt{\pi}} \sum_{k=0}^\infty \frac{(-1)^k \, t^{2k}}{(2k)!} \frac{(6k-1)!!}{2^{3k}} \sqrt{\pi} \notag  \\
&=\sum_{k=0}^\infty \frac{(-1)^k \,(6k-1)!!} {2^{3k} \,(2k)!}t^{2k}    \notag
\end{align*}

\noindent where $(6k-1)!!=1\cdot 3 \cdot 5 \cdots (6k-1)$ and $(-1)!!=1$. The radius of convergence of the above  power series is equal to zero, contradicting the fact that the improper integral converges for all $t\in \mathbb{R}$. This is because switching the integration and summation signs cannot be justified. Letting, for each  $t\in \mathbb{R}$ and $n=1, 2, ...$,
\begin{equation*}
 f_n(x)= \frac{(itx^3)^n}{n!} e^{-x^2}
\end{equation*}

\noindent we see that, for all $x\in \mathbb{R}$,
\begin{equation*}
 |f_n(x)|= \frac{|t|^n |x|^{3n}}{n!} e^{-x^2} \leq M_n
\end{equation*}

\noindent where, due to the fact that the function
\begin{equation*}
 g(x)=  |x|^{3n} e^{-x^2}
\end{equation*}

\noindent is maximized at $x=\pm \sqrt{\frac{3n}{2}}$,
\begin{equation*}
 M_n= \frac{|t|^n }{n!} \left( \frac{3n}{2}\right)^{\frac{3n}{2}} e^{-\frac{3n}{2}} \  .
\end{equation*}

 \noindent For $t\neq 0$,
\begin{equation*}
 \lim_{n\to +\infty} \left| \frac{M_{n+1}}{M_n}\right|=+\infty
\end{equation*}

\noindent so $\sum_{n=0}^\infty M_n$ diverges and  the \textit{Weierstrass $M$-test} cannot be applied to establish the uniform convergence of $\sum_{n=0}^\infty f_n(x)$.

\end{remark}

\begin{corollary}\label{x} For $t\in \mathbb{R}$, the characteristic function of the cube of the $\mathcal{N}(0,1)$-distributed  random variable  $T$ of (\ref{t}) is
\begin{equation}\label{nd1}
\langle  e^{i t T^3}  \rangle=\left\{
\begin{array}{llr}
 \frac{1 }{3 |t| \sqrt{6 \pi}}\,e^{\frac{1}{108 t^2}} \,K_{1/3}\left(\frac{1}{108 t^2} \right)   &, \mbox{ if } t\neq 0  \\
1&, \mbox{ if } t=0
\end{array}
\right.   \  .
\end{equation}
\end{corollary}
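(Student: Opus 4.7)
The plan is to reduce the corollary directly to Theorem 2.1 by exploiting the scaling relation (\ref{t}) between $T$ and $X$. Since $T=\sqrt{2}\,X$, cubing gives $T^{3}=2\sqrt{2}\,X^{3}$, and consequently
\begin{equation*}
\langle e^{itT^{3}}\rangle=\langle e^{i(2\sqrt{2}\,t)X^{3}}\rangle,
\end{equation*}
so it is enough to evaluate the characteristic function of $X^{3}$ from Theorem \ref{X} at the rescaled argument $s=2\sqrt{2}\,t$.

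Next I would handle the two cases of (\ref{nd1}) separately. The case $t=0$ is immediate, since $s=0$ then and the theorem gives the value $1$. For $t\neq 0$ I would substitute $s=2\sqrt{2}\,t$ into
\begin{equation*}
\langle e^{isX^{3}}\rangle=\frac{2}{3|s|\sqrt{3\pi}}\,e^{\frac{2}{27s^{2}}}\,K_{1/3}\!\left(\frac{2}{27s^{2}}\right)
\end{equation*}
and simplify the resulting constants: $s^{2}=8t^{2}$ yields $\tfrac{2}{27 s^{2}}=\tfrac{1}{108\,t^{2}}$, while $|s|=2\sqrt{2}\,|t|$ and $\sqrt{3\pi}$ combine via $\tfrac{2}{3\cdot 2\sqrt{2}\,|t|\sqrt{3\pi}}=\tfrac{1}{3|t|\sqrt{6\pi}}$, matching the coefficient in (\ref{nd1}).

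There is no real obstacle here; the entire argument is a one-line change of variable in the parameter $t$ inside the characteristic function. The only thing to be careful about is the use of $|s|$ (rather than $s$) in Theorem \ref{X}, which is what allows the formula to remain valid for both signs of $t$ without any additional case distinction. Hence the corollary follows by direct substitution.
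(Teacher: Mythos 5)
Your argument is correct and is exactly the paper's proof: both use $T^3=2\sqrt{2}\,X^3$ to reduce to Theorem \ref{X} with $t$ replaced by $2\sqrt{2}\,t$, and your explicit simplification of the constants ($s^2=8t^2$, $\sqrt{2}\sqrt{3\pi}=\sqrt{6\pi}$) is accurate. Nothing is missing.
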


\begin{proof}
\begin{equation*}
\langle  e^{i t T^3}  \rangle=\langle  e^{i (2\sqrt{2}t) X^3}  \rangle
\end{equation*}

\noindent and (\ref{nd1}) follows from (\ref{nd0}) by replacing $t$ by $2\sqrt{2}t$.
\end{proof}

\begin{corollary}\label{xx} For $t\in \mathbb{R}$, the  characteristic function of the cube of the  $\mathcal{N}(0,\sigma^2)$-distributed  random variable  $S$ of (\ref{tt})  is
\begin{equation}\label{nd2}
\langle  e^{i t S^3}  \rangle=\left\{
\begin{array}{llr}
 \frac{1 }{3 \sigma^3|t| \sqrt{6 \pi}}\,e^{\frac{1}{108\sigma^6 t^2}} \,K_{1/3}\left(\frac{1}{108\sigma^6 t^2} \right)  &, \mbox{ if } t\neq 0  \\
1&, \mbox{ if } t=0
\end{array}
\right.   \ .
\end{equation}
\end{corollary}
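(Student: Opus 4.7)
The plan is a one-line scaling argument that piggybacks on Corollary \ref{x}, fully parallel to how Corollary \ref{x} was itself deduced from Theorem \ref{X}. First I would use the definition (\ref{tt}), $S = \sigma T$ with $\sigma > 0$, together with the fact that the Schwartz class $\mathcal{S}$ is a common invariant domain for all powers of the self-adjoint operators involved, to obtain the operator identity
\begin{equation*}
S^3 = \sigma^3\, T^3.
\end{equation*}
This immediately yields, for every $t \in \mathbb{R}$,
\begin{equation*}
\langle e^{itS^3}\rangle \;=\; \langle e^{i(\sigma^3 t)T^3}\rangle,
\end{equation*}
reducing the problem to a parameter substitution in the already established formula for the characteristic function of $T^3$.

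Next I would apply Corollary \ref{x} with $t$ replaced by $\sigma^3 t$. For $t \neq 0$, the prefactor $\frac{1}{3|\sigma^3 t|\sqrt{6\pi}}$ simplifies to $\frac{1}{3\sigma^3|t|\sqrt{6\pi}}$ thanks to the hypothesis $\sigma > 0$, while both the exponential factor and the Bessel argument $\frac{1}{108(\sigma^3 t)^2}$ collapse to $\frac{1}{108\sigma^6 t^2}$, reproducing exactly the right-hand side of (\ref{nd2}). The case $t = 0$ is immediate from $\langle \Phi,\Phi\rangle = 1$.

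I anticipate no substantial obstacle: the positivity $\sigma > 0$ built into (\ref{tt}) is precisely what guarantees the clean identification $|\sigma^3 t| = \sigma^3|t|$, so no absolute-value bookkeeping is required. Were $\sigma$ permitted to be negative one would merely carry an additional $|\sigma|^3$, but under the stated hypothesis this issue does not arise, and the corollary follows by direct substitution without any new analytic input.
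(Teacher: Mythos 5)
Your argument is correct and coincides with the paper's own proof: both use $S^3=\sigma^3 T^3$ to write $\langle e^{itS^3}\rangle=\langle e^{i(\sigma^3 t)T^3}\rangle$ and then substitute $\sigma^3 t$ for $t$ in Corollary \ref{x}, with $\sigma>0$ handling the absolute value. No further comment is needed.
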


\begin{proof}
\begin{equation*}
\langle  e^{i t S^3}  \rangle=\langle  e^{i (\sigma^3  t) T^3}  \rangle
\end{equation*}

\noindent and (\ref{nd2}) follows from (\ref{nd1}) by replacing $t$ by $\sigma^3 t$.
\end{proof}

\begin{remark} \rm Attempting to compute the characteristic function of the cube of the $\mathcal{N}(\mu, \sigma^2)$-distributed random variable $W$ of (\ref{ttt}) results in the spectral integral
\begin{equation*}
 e^{it W^3}= e^{it (\sigma X+\mu)^3} =f(X)= \int_{\mathbb{R}}f(\lambda) \,dE_\lambda=  \int_{\mathbb{R}}e^{it(\sigma\lambda+\mu)^3} \, dE_\lambda
\end{equation*}

\noindent and, as in Remark \ref{r1},
\begin{equation}\label{intr}
 \langle e^{it W^3} \rangle=\frac{1}{ \sqrt{\pi}}   \int_{\mathbb{R}}e^{it(\sigma\lambda+\mu)^3-\lambda^2}    \,d\lambda
\end{equation}

\noindent For $\mu \cdot \sigma \neq 0$, the integral in (\ref{intr}) seems to be analytically intractable, even with the use of symbolic computation software,  in the sense that no closed form in terms of special functions seems to exist. In such cases one can, at best, look for an asymptotic expansion.

\end{remark}

\section{Indeterminacy of $X^3$}

The indeterminacy of the distribution of the cube of a $\mathcal{N}(0,1/2)$-distributed random variable $X$, i.e. the fact that the distribution of  $X^3$  can not be uniquely determined by its moments, was established in \cite{berg} by exhibiting other probability distributions with the same moments as  $X^3$.

A suggestion for the indeterminacy of the distribution of $X^3$ was provided by the fact that the \textit{Carleman criterion}  (see Section 4.2 of \cite{schmudgen2}), a sufficient condition for \textit{determinacy}, is not satisfied.

The indeterminacy of the distribution of $X^3$ was also established in \cite{stoyanov}, for general $\mathcal{N}(\mu, \sigma^2)$-distributed random variables,  using the \textit{Krein condition} (see  Section 4.3 of \cite{schmudgen2}),  a sufficient condition for \textit{indeterminacy}. It states that if a probability density $f(x)$ on the real line, satisfies
\begin{equation*}
 \int_{-\infty}^{\infty} \frac{\ln f(x)}{1+x^2} \,dx>-\infty
\end{equation*}

\noindent then it is indeterminate.  For the probability density of $X^3$, the explicit calculation (not provided in \cite{{stoyanov}}) is given in the following:

\begin{proposition} Let
\begin{equation*}
 f(x)= \frac{1}{3 \sqrt{\pi}} |x|^{-2/3} \, e^{- |x|^{2/3}} \,,\,x\neq 0
\end{equation*}

\noindent be the probability density of $X^3$ (as in (\ref{dxt})). Then
  \begin{equation*}
 \int_{-\infty}^{\infty} \frac{\ln f(x)}{1+x^2} \,dx= -\pi \left(\ln 3+\frac{1}{2} \ln \pi+2  \right)  >-\infty
\end{equation*}

\noindent so $f$ is indeterminate.

\end{proposition}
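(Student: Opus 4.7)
The plan is to expand the logarithm and split the integral into three elementary pieces, each of which reduces to a standard evaluation. Writing
\begin{equation*}
\ln f(x) = -\ln 3 - \tfrac{1}{2}\ln \pi - \tfrac{2}{3}\ln|x| - |x|^{2/3},
\end{equation*}
and exploiting the fact that both $f$ and the Cauchy kernel $1/(1+x^2)$ are even, I would split the given integral into four parts corresponding to the four summands above.

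The first piece, coming from the constants $-\ln 3 - \frac{1}{2}\ln\pi$, is immediate from $\int_{-\infty}^{\infty}\frac{dx}{1+x^2}=\pi$. For the logarithmic piece I would use the classical evaluation
\begin{equation*}
\int_{-\infty}^{\infty}\frac{\ln|x|}{1+x^2}\,dx = 2\int_{0}^{\infty}\frac{\ln x}{1+x^2}\,dx = 0,
\end{equation*}
which is obtained at once from the substitution $x \mapsto 1/x$ on $(0,\infty)$. Thus the entire contribution of the $-\frac{2}{3}\ln|x|$ term vanishes, which is a convenient simplification.

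The only non-trivial piece is
\begin{equation*}
\int_{-\infty}^{\infty}\frac{|x|^{2/3}}{1+x^2}\,dx \;=\; 2\int_{0}^{\infty}\frac{x^{2/3}}{1+x^2}\,dx.
\end{equation*}
Here I would invoke the Mellin-type identity
\begin{equation*}
\int_{0}^{\infty}\frac{x^{s-1}}{1+x^2}\,dx \;=\; \frac{\pi}{2\sin(\pi s/2)}, \qquad 0<s<2,
\end{equation*}
(derivable via the substitution $u=x^2$ and the beta integral $B(p,1-p)=\pi/\sin(\pi p)$). Taking $s=5/3$ gives $\sin(5\pi/6)=1/2$, so this integral equals $\pi$, and the symmetric integral over $\mathbb{R}$ equals $2\pi$.

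Combining the three evaluations yields
\begin{equation*}
\int_{-\infty}^{\infty}\frac{\ln f(x)}{1+x^2}\,dx \;=\; -\pi\ln 3 - \tfrac{\pi}{2}\ln\pi - 0 - 2\pi \;=\; -\pi\!\left(\ln 3 + \tfrac{1}{2}\ln\pi + 2\right),
\end{equation*}
which is finite, so Krein's criterion applies and indeterminacy follows. The main subtlety — really the only one — is verifying integrability of $\ln f(x)/(1+x^2)$ near $x=0$ (where $\ln f$ has a mild logarithmic singularity) and at infinity (where the $|x|^{2/3}/(1+x^2)$ tail behaves like $|x|^{-4/3}$); both are elementary. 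I would note these convergence checks explicitly before applying the splitting, since the identity $\int \ln|x|/(1+x^2)\,dx=0$ implicitly requires absolute convergence of each separated piece.
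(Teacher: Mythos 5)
Your proposal is correct and follows essentially the same route as the paper: expand $\ln f$, use evenness, kill the $\ln|x|$ term by the substitution $x\mapsto 1/x$, and evaluate $\int_0^\infty x^{2/3}/(1+x^2)\,dx=\pi$ via the beta function and the reflection formula (the paper gets there by $x=\tan\theta$, you by the Mellin form with $u=x^2$ — the same computation in different clothing). Your explicit remark on convergence of the separate pieces is a small, harmless addition not spelled out in the paper.
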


\begin{proof} For $x\neq 0$,
\begin{equation*}
\ln\, f(x)= -\ln 3-\frac{1}{2}\ln\pi-\frac{2}{3}\ln |x| - |x|^{2/3}\ .
\end{equation*}

\noindent Thus

\begin{align}\label{wq}
   \int_{-\infty}^{\infty}& \frac{\ln f(x)}{1+x^2} \,dx=2 \,   \int_{0}^{\infty} \frac{\ln f(x)}{1+x^2}
   \,dx\\
   &=2 \left(-(\ln 3+\frac{1}{2}\ln\pi)\int_{0}^{\infty} \frac{1}{1+x^2} \,dx-\frac{2}{3}\int_{0}^{\infty} \frac{\ln x}{1+x^2} \,dx-\int_{0}^{\infty} \frac{x^{2/3}}{1+x^2}
   \,dx\right) \notag \ .
  \end{align}

 \noindent We know that
   \begin{equation*}
 \int_{0}^{\infty} \frac{1}{1+x^2} \,dx=\arctan (\infty)-\arctan (0)=\frac{\pi}{2}
\end{equation*}

\noindent and, letting $x=\frac{1}{t}$, we obtain
 \begin{equation*}
 \int_{0}^{\infty} \frac{\ln x}{1+x^2} \,dx=- \int_{0}^{\infty} \frac{\ln t}{1+t^2} \,dt \implies \int_{0}^{\infty} \frac{\ln x}{1+x^2} \,dx=0 \ .
\end{equation*}

\noindent Letting $x=\tan \theta$, we obtain

\begin{align*}
\int_{0}^{\infty} \frac{x^{2/3}}{1+x^2}  \,dx&=\int_0^{\frac{\pi}{2}} \tan^{2/3}\theta\,d\theta=\int_0^{\frac{\pi}{2}} \sin^{2/3}\theta\, \cos^{-2/3}\theta\,d\theta \\
&=\frac{1}{2} \operatorname{B} (5/6, 1/6)=\frac{1}{2} \Gamma (5/6) \Gamma(1/6)=\frac{1}{2}\frac{\pi}{\sin (\pi/6)}=\pi\notag
\end{align*}

\noindent where, $\operatorname{B}$ and $\Gamma$ are the Beta and Gamma functions, respectively, and we have used the well-known properties
\begin{equation*}
 \operatorname{B}(m,n)=\frac{\Gamma(m) \Gamma(n)}{\Gamma(m+n)}\,,\,\Gamma(1)=1\,,\,\Gamma(z) \Gamma(1-z)=\frac{\pi}{\sin (\pi z)} \ .
\end{equation*}

\noindent Substituting in (\ref{wq}) we obtain
\begin{equation*}
 \int_{-\infty}^{\infty} \frac{\ln f(x)}{1+x^2} \,dx= -\pi \left(\ln 3+\frac{1}{2} \ln \pi+2  \right) \ .
\end{equation*}
\end{proof}

\bibliographystyle{amsplain}

\end{document}